\newtheorem{theorem}{Theorem}
\title{Constructing large tables of numbers of maps\\
 by orientable genus}
\author{
Alain Giorgetti$^1$ $\qquad$ Timothy~R.~S. Walsh$^2$ \\
\vspace{0.4mm}
$^1$ FEMTO-ST Institute, University of Franche-Comt\'e,\\
 16 route de Gray, 25030 Besan\c{c}on CEDEX, France,\\
\texttt{alain.giorgetti@femto-st.fr}\\
\vspace{0.4mm}
$^2$ Department of Computer Science,\\
University of Quebec in Montreal (UQAM),\\
P.O.~Box~8888, Station A, Montreal, Quebec, Canada, HC3-3P8,\\
\texttt{walsh.timothy@uqam.ca} 
}
\date{}
\begin{document}
\maketitle

\begin{abstract}
The Carrell-Chapuy recurrence formulas dramatically improve the efficiency of
counting orientable rooted maps by genus, either by number of edges alone or by
number of edges and vertices. This paper presents an implementation of these
formulas with three applications: the computation of an explicit rational
expression for the ordinary generating functions of rooted map numbers with a
given positive genus, the construction of large tables of rooted map numbers,
and the use of these tables, together with the method of A. Mednykh and R.
Nedela, to count unrooted maps by genus and number of edges and vertices.
\end{abstract}

\section{Introduction}
A \emph{map} is a $2$-cell imbedding of a connected graph, loops and multiple
edges allowed, on a compact surface, which in this article will be taken to be
orientable and without boundaries, and is thus characterized by a single
non-negative integer, its \emph{genus}. A map is \emph{rooted} if a \emph{dart}
-- an edge-vertex incidence pair -- is distinguished as the root. By
\emph{counting} maps we mean counting equivalence classes of maps under
orientation-preserving homeomorphism; in the case of rooted maps, the
homeomorphism must preserve the distinguished oriented edge. In this case the
homeomorphism preserves all the darts~\cite{Tu1}, so that rooted maps can be
counted without considering the symmetries of the maps, which is why rooted maps
were counted before unrooted maps.

Let $m_g(n)$ be the number of rooted maps with $n$ edges on the orientable
surface of genus $g$. Let $M_g(z) = \sum_{n \geq 0} {m}_g(n) z^n$ be the
ordinary generating function counting genus-$g$ rooted maps by number of edges
(the exponent of $z$). Let $m_g(v,f)$ be the number of rooted genus-$g$ maps
with $v$ vertices and $f$ faces. By face-vertex duality, this number is equal to
the number $m_g(f,v)$ of rooted genus-$g$ maps with $f$ vertices and $v$ faces.
The ordinary generating function that counts rooted genus-$g$ maps is the
following formal power series in two variables $u$ and $w$:
\begin{equation}
M_g(w,u) = \sum_{v,f \geq 1} m_g(v,f) w^v u^f. \label{Mg:eq}
\end{equation}

The Carrell-Chapuy recurrence formulas~\cite{CC14-v3} dramatically improve the
efficiency of counting rooted maps by genus. We show how to use them
 to determine explicit rational expressions for the generating functions
 $M_g(z)$ and closed-form formulas for the numbers $m_g(n)$.
We also have used Carrell-Chapuy recurrence formulas to construct large tables
of numbers of rooted and unrooted maps of genus up to 50 with up to 100 edges.
Our goal is to provide these numbers to researchers for further studies of their
 properties.

The paper is organized as follows. Section~\ref{history:sec} summarizes the
history of two closely related problems, namely computing numbers of rooted maps
by genus and finding a closed form for their generating functions.
Section~\ref{rm:edge:sec} presents formulas for numbers of rooted maps with a
fixed genus. In Section~\ref{unrooted:sec} we discuss counting unrooted maps and
in Section~\ref{complex:sec} we give a complexity analysis and the results of
time trials. In the appendix we include a table of numbers of unrooted
maps counted by genus, number of edges and number of vertices.  We do not
include a table of numbers of rooted maps because the reader can easily
construct such a table from the recurrence in~\cite[Corollary 3]{CC14-v3}  or
the optimized version of it that we present as formula (\ref{cc:edge:face:th})
here.  A larger table, a table of numbers of rooted maps and
a text file of the source code are available from the second author on request
and can be found in release 0.4.0 of the MAP project~\cite{MAP}. The source code can
also be found in~\cite{Walsh14}.

\section{Historical notes}
\label{history:sec}
For counting by number of edges alone, W.~T. Tutte~\cite{Tu1} first showed that
the generating function $M_0(z)$ for rooted planar maps can be parametrically
defined by $M_0(z) = (3-\xi)(\xi-1)/3$, where the parameter $\xi$ is the series
in $z$ satisfying $\xi = 1 + 3z \xi^2$. Tutte also found a closed-form formula
for $m_0(n)$. For counting with two parameters (i.e. by number of edges and
vertices, edges and faces, or vertices and faces), W.~T. Tutte~\cite{Tu3} and D.
Arqu\`es~\cite[Theorem 4]{Arq87a} respectively found a parametric polynomial
definition of $M_0(w,u)$ and
 a parametric rational definition of
$M_1(w,u)$.
Arqu\` es also obtained a closed-form formula for the number of rooted toroidal
maps with $n$ edges and another one for the number of rooted toroidal maps with
$v$ vertices and $f$ faces.

In \cite{Tu3}, a recursive formula was found for the number of rooted planar
maps given the number of vertices, the number of edges, and the degree of the
face containing the root; these numbers of maps were then added over all
possible degrees of this face and the result expressed in terms of generating
functions. In~\cite{Walsh71}, this method was generalized to obtain a recursive
formula for the number of maps of genus $g$ with a distinguished dart in each
vertex given the number of vertices and the degree of each one; these numbers
were then multiplied by the appropriate factor and added over all possible
non-increasing sequences of vertex-degrees summing to $2n$ to obtain the number
of rooted maps of genus $g$ with $n$ edges and $v$ vertices.  A table of these
numbers of maps with up to $14$ edges appears in~\cite{Walsh71} (see~\cite{W1}
for a published account of this work and a table of maps with up to $11$ edges)
but no attempt was made there to express this result in terms of generating
functions. 
In~\cite{BC86} an improvement on the method of \cite{Walsh71} was introduced:
to count rooted genus-$g$ maps it is sufficient to know the degree of the first
$g+1$ vertices and to distinguish a dart of only the first vertex as the root,
thus reducing the number of maps that have to be considered.  

For any genus $g$ the existence of a parametric rational expression for the
generating functions $M_g(z)$ and $M_g(w,u)$ is stated by E. Bender and E.
Canfield, in~\cite{BC91} for $M_g(z)$ and in~\cite{BCR93} for  $M_g(w,u)$. The
first of these two papers~\cite{BC91} also presents explicit rational functions
for $M_2(z)$ and $M_3(z)$. A common pattern for all these rational functions
is proposed, and an upper bound for the degree of their numerator is
conjectured. For the univariate function $M_g(z)$ a bound was found in
\cite{gio98a} and refined in~\cite{giorgetti10:ip}. For the bivariate function
$M_g(w,u)$ a bound was proved in~\cite{ag99}. These results are summarized in
the following two theorems.

\begin{theorem}[\cite{giorgetti10:ip}]
\label{pattern:th}
For any positive integer $g$, the ordinary generating function $M_g(z)$ counting
rooted maps on a closed orientable surface of genus $g$ by number of edges
(exponent of $z$) can be written as

\begin{equation*}
M_g(z) = z^{2g} (1-2m)^{2-3g} (1-3 m)^{-2} (1-6m)^{3-5g} P_g(m),
\end{equation*}
where $m = \dfrac{1 - \sqrt{1-12z}}{6}$ and $P_g(m)$ is a
polynomial of $m$ of degree at most $4g-4$.
\end{theorem}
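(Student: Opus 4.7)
The plan is to proceed by induction on the genus $g$, using (a) the rationalizing change of variable $z = m(1-3m)$, so that $\sqrt{1-12z} = 1-6m$ and $dm/dz = (1-6m)^{-1}$, and (b) a recurrence from the literature that reduces $M_g(z)$ to generating functions of lower genus. Under this change of variable, $M_g(z)$ for $g\ge 1$ is a rational function of $m$, and the problem becomes one of controlling the orders of its poles (potentially at $m=1/2$, $m=1/3$, $m=1/6$) and the degree of the resulting polynomial $P_g(m)$.

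The base case $g=1$ follows from Arqu\`es's explicit formula for $M_1(z)$: rewriting it in the parameter $m$ gives exactly the stated template with $P_1$ of degree at most $0$. For the inductive step I would invoke a recurrence such as the Bender-Canfield equation or the Carrell-Chapuy recurrence, which expresses $M_g(z)$ as a polynomial combination of $z$, the series $M_0(z),\ldots,M_{g-1}(z)$, and their $z$-derivatives, plus convolution terms $M_{g_1}(z) M_{g_2}(z)$ with $g_1+g_2 \le g$ and $g_i<g$. Substituting the induction hypothesis into the right-hand side, converting each $d/dz$ into $(1-6m)^{-1} d/dm$, and factoring out the common prefactor $z^{2g}(1-2m)^{2-3g}(1-3m)^{-2}(1-6m)^{3-5g}$ leaves a sum that I would then show is a polynomial in $m$ of degree at most $4g-4$.

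The main obstacle is the sharp bookkeeping of exponents. A single differentiation with respect to $z$ both raises the multiplicity of $(1-6m)$ in the denominator by $2$ and, applied to a $(1-3m)^{-2}$ factor, tends to increase the multiplicity of $(1-3m)$, yet the theorem asserts that the $(1-3m)$ pole is always of order exactly $2$, independent of $g$. Showing that the extra $(1-3m)$ factors produced by differentiation cancel against the polynomial coefficients supplied by the recurrence, at every inductive level, is the delicate point, and reflects a special algebraic feature of the recurrence rather than a purely syntactic closure property. The degree bound $4g-4$ for $P_g$ similarly requires a tight accounting in which each constituent of the recurrence contributes polynomial factors of the right size without overshooting, so a by-hand verification of the linear growth rate $4$ in $g$ (rather than something larger) forms the quantitative heart of the argument.
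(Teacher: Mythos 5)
The paper does not prove Theorem~\ref{pattern:th}: it is imported verbatim from \cite{giorgetti10:ip} (refining \cite{BC91} and \cite{gio98a}), so there is no in-paper proof to compare yours against. Judged on its own terms, your outline follows the standard route of that literature (rationalizing substitution $z=m(1-3m)$, induction on $g$ via a topological recurrence), and your base case and the identity $\sqrt{1-12z}=1-6m$, $dm/dz=(1-6m)^{-1}$ are correct. But as written the argument has a genuine gap in the inductive step, beyond the parts you explicitly defer.

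The gap is that neither of the recurrences you invoke expresses $M_g$ as a combination of strictly lower-genus data, so ``substitute the induction hypothesis into the right-hand side and factor out the prefactor'' is not an available move. In the Carrell--Chapuy recurrence (\ref{cc:edge:th}) the convolution sum runs over all $i+j=g$ with $i,j\geq 0$, hence contains the terms $i=g,\,j=0$ and $i=0,\,j=g$; in generating-function form this puts $M_g$ (and its derivative, via the weights $2k+1$) on both sides, and one must instead solve a first-order linear ODE for $M_g$ whose coefficients involve $M_0$. The Bender--Canfield/Tutte-type equations likewise keep $M_g$ on both sides through a catalytic variable and require a kernel-method extraction. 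In either case the substance of the theorem is precisely showing that the solution of that equation is rational in $m$ with poles at $1-2m$, $1-3m$, $1-6m$ of orders exactly $3g-2$, $2$, $5g-3$ and numerator degree at most $4g-4$ --- e.g., that the integration step produces no logarithms and that the $(1-3m)$ order does not grow with $g$. You correctly identify this as ``the delicate point'' but leave it entirely unproved, so the proposal is a plan whose quantitative core (which is the whole content of the refinement in \cite{giorgetti10:ip}, obtained there with computer-algebra assistance) is missing.
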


\begin{theorem}[{\cite[Theorem 1]{ag99}}]
\label{pattern:vertices:faces:th}
For any positive integer $g$, the ordinary generating function $M_g(w,u)$
counting rooted maps on a closed orientable surface of genus $g$ by number of
vertices (exponent of $w$) and faces (exponent of $u$) can be
written as
\begin{eqnarray}
M_{{g}}(w,u)=
{\frac 
  {pq\left (1-p-q\right ) P_{g}(p,q)}
  {{\left[\left (1-2p-2q\right )^{2}-4pq\right]}^{5g-3}
  },
}                  \label{MgP:eq}
\end{eqnarray}
where $P_{g}(p,q)$ is a symmetric polynomial in $p$ and $q$ 
of total degree at most $6g-6$ with integral coefficients.
\end{theorem}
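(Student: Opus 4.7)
The plan is to proceed by strong induction on the genus $g$, using a recurrence that expresses $M_g(w,u)$ in terms of $M_{g'}(w,u)$ for $g'<g$ (such as the Bender--Canfield--Richmond recurrence from \cite{BCR93}, or the bivariate Carrell--Chapuy recurrence used later in this paper). First I would fix the Arqu\`es change of variables that implicitly underlies the statement: introduce parameters $p=p(w,u)$ and $q=q(w,u)$ defined by the algebraic system relating the ``root-edge'' series on the vertex side and on the face side (the system is quadratic, symmetric under $p\leftrightarrow q$, and reduces to Arqu\`es' parametrization of $M_1(w,u)$). Under this change of variables, $w$, $u$, and the denominator $D(p,q):=(1-2p-2q)^2-4pq$ are all polynomials in $p,q$, and the Jacobian of $(w,u)$ with respect to $(p,q)$ is a rational function whose numerator and denominator are explicit powers of $D(p,q)$ and of $1-p-q$.

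The base case $g=1$ is Arqu\`es' result \cite{Arq87a}, which gives $M_1(w,u)$ in exactly the form of \eqref{MgP:eq} with $P_1$ a constant (so total degree $0=6\cdot 1-6$). For the inductive step, I would substitute the inductive form
\[
M_{g'}(w,u)=\frac{pq(1-p-q)P_{g'}(p,q)}{D(p,q)^{5g'-3}},\qquad \deg P_{g'}\le 6g'-6,
\]
into the recurrence for $M_g$. The recurrence involves multiplications of two such functions (coming from the ``cutting a handle'' term that splits a map into two smaller maps) and the action of first-order differential operators in $w$ and $u$ (coming from rooting conventions and the edge-contraction/deletion terms). The main work is to translate $\partial/\partial w$ and $\partial/\partial u$ into operators in $p$ and $q$ via the Jacobian, and to verify that each operation preserves the shape ``$pq(1-p-q)$ times a polynomial divided by a power of $D(p,q)$''.

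Key step in the bookkeeping: show that after clearing the single common factor of $pq(1-p-q)$ from the numerator, every contribution to $P_g(p,q)$ is a polynomial, and that the exponent of $D(p,q)$ in the denominator drops to exactly $5g-3$ after cancellation with the factors of $D(p,q)$ produced by the Jacobian. The degree bound $\deg P_g\le 6g-6$ then follows by tracking leading degrees through the recurrence: the product term contributes $\deg P_{g_1}+\deg P_{g_2}+c$ for an explicit constant $c$ accounted for by the factors $pq(1-p-q)$ and by $D(p,q)$, and the self-convolution/derivative term contributes $\deg P_{g-1}+c'$; in both cases the telescoping sum with $g_1+g_2=g$ yields the bound $6g-6$. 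Symmetry $P_g(p,q)=P_g(q,p)$ is inherited from the face-vertex duality $m_g(v,f)=m_g(f,v)$ together with the fact that the parametrization and recurrence are invariant under simultaneously swapping $(w,u)$ with $(u,w)$ and $(p,q)$ with $(q,p)$; integrality of the coefficients is immediate since the recurrence has integer coefficients and the change of variables is defined over $\mathbb{Z}$.

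The principal obstacle is the tight degree accounting: a naive bound coming from the recurrence gives something like $\deg P_g\le 6g-6+\varepsilon_g$ with a non-zero error term, and showing that the claimed bound actually holds requires exploiting the special polynomial identities satisfied by $D(p,q)$, $1-p-q$, and the Jacobian (for instance, that certain apparent poles at $D(p,q)=0$ cancel because $D$ divides the associated derivatives in a controlled way). Once these identities are in place, the induction closes and the theorem follows.
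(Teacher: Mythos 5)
This theorem is quoted by the paper from \cite{ag99} and is not proved here at all, so the only meaningful comparison is with the proof in that reference, which does broadly follow the route you sketch: induction on the genus, a symmetric parametrization $(p,q)$ of $(w,u)$ reducing at $g=1$ to Arqu\`es' formula \cite{Arq87a}, and a recurrence obtained from root-edge deletion. At that level your plan points in the right direction, and your base case ($P_1$ constant, consistent with $6\cdot 1-6=0$), the symmetry argument via face--vertex duality, and the integrality remark are all reasonable.

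There are, however, two genuine gaps. First, no available recurrence expresses $M_g(w,u)$ purely in terms of strictly smaller genera, contrary to what your inductive step assumes. In the bivariate Carrell--Chapuy recurrence (\ref{cc:edge:face:th}) the terms $m_g(n-1,f)$, $m_g(n-1,f-1)$ and the quadratic terms with $(i,j)=(0,g)$ or $(g,0)$ all involve genus $g$ itself, so in generating-function form $M_g$ appears on both sides through differential operators and a product with $M_0$; in the Tutte/Bender--Canfield--Richmond approach the functional equation carries an extra catalytic variable (the root-face degree) that must first be eliminated. Either way, ``substitute the inductive form into the recurrence'' does not close the induction: you must additionally prove that the resulting equation for $M_g$, with lower-genus source terms of the assumed shape, forces its unique power-series solution to be rational in $p,q$ of that same shape. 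Second, and more seriously, you explicitly defer the two quantitative claims --- the degree bound $6g-6$ for $P_g$ and the denominator exponent $5g-3$ --- to unspecified ``special polynomial identities'' cancelling an error term $\varepsilon_g$. Mere rationality of $M_g(w,u)$ in a parametrization of this kind was already established in \cite{BCR93}; the sharp exponents are precisely the content of \cite[Theorem 1]{ag99}, so leaving them as an acknowledged obstacle means the proposal establishes only the part of the statement that was previously known.
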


In~\cite{WG14} the first and second authors calculated the polynomial $P_g(p,q)$
for $g$ up to $6$ and thus counted rooted maps of genus up to $6$ by number of
vertices and faces as well as by number of edges (using
Theorem~\ref{pattern:th}). In~\cite{WGM12} the first author of that paper, using
a more powerful computer, extended these calculations up to genus 10 and also
counted unrooted maps of genus up to 10 by number of vertices and faces, the
second author counted rooted maps of genus up to 11 by number of edges and the
third author, A. Mednykh, counted unrooted maps of genus 11 by number of edges.
The cost of counting unrooted maps, once a table of numbers of rooted maps has
been constructed, was greatly dominated by the cost of counting rooted maps.

More recently, a far more efficient method for counting rooted maps with a fixed
genus was discovered by S. R. Carrell and G. Chapuy~\cite{CC14-v3}. They
showed~\cite[Theorem 1]{CC14-v3} that the number $m_g(n)$ of rooted maps of
genus $g$ with $n$ edges satisfies the following recurrence relation (we have
modified the formulas for the sake of computational efficiency):

\begin{eqnarray}
(n + 1)\, m_g(n) & = & (8n-4)\, m_g(n-1) \nonumber \\
 & & + \ (2n-3) (n-1) (2n-1) \, m_{g-1}(n-2) \label{cc:edge:th} \\
& &
+ \ 3
\sum_{
  \begin{subarray}{c}
   i+j=g\\ 
   i,j\geq 0
  \end{subarray}
 }
 \sum_{
  \begin{subarray}{c}
   k+l=n-2\\
   k \geq 2i, l \geq 2j \\
  \end{subarray}
 } (2k+1) (2l+1) \, m_i(k) \, m_j(l) \nonumber
\end{eqnarray}
for $n \geq  1$, with the initial conditions $m_0(0) = 1$ and $m_g(n) = 0$ if $g
< 0$ or $n < 2g$. For counting with two parameters, Carrell and Chapuy
showed~\cite[Corollary 3]{CC14-v3} that the number $m_g(n,f)$ of rooted maps of
genus $g$ with $n$ edges and $f$ faces satisfies the following recurrence
relation:
\begin{eqnarray}
(n + 1) \, m_g(n,f) & = & (4n - 2) (m_g(n-1,f)+m_g(n-1,f-1)) \nonumber \\
 & & + \ (2n - 3)(n - 1)(2n - 1) \, m_{g-1}(n-2,f) \label{cc:edge:face:th} \\
& & + \ 3
\sum_{
    \begin{subarray}{c}
    i+j=g\\ 
    i,j\geq 0
    \end{subarray}
 }
 \sum_{
    \begin{subarray}{c}
    k+l=n-2\\
     k\geq 2i, l \geq 2j 
    \end{subarray}
 }
 \sum_{
    \begin{subarray}{c}
    u+v=f\\
    u,v \geq 1
    \end{subarray}
 }
  (2k+1)(2l+1) \, m_{i}(k,u) \, m_{j}(l,v) \nonumber
\end{eqnarray}
for $n, f \geq  1$, with the initial conditions $m_{0}(0,1) = 1$ and $m_{g}(n,f)
= 0$ if $g < 0$ or $n < 2g$ or $f < 1$ or $n-f+2(1-g) < 1$.

\section{Fixed genus formulas}
\label{rm:edge:sec}

This section shows simple -- but as far as we know not yet published --
consequences of Theorem~\ref{pattern:th} (about the rationality of the
generating series $M_g(z)$) for the computation of rooted map numbers.
The first consequence, given in Theorem~\ref{fixed:genus:edge:rec}, is a
recurrence formula between numbers of rooted maps with the same positive genus
$g$. The second consequence, given in Theorem~\ref{closed:genus:edge:nb}, is a
closed formula for the number $m_g(n)$, for any positive genus $g$ and any
number of edges $n$. This formula depends on integers which are the coefficients
of the polynomial $P_g(m)$ from Theorem~\ref{pattern:th}. We start this section
with a theorem completing Theorem~\ref{pattern:th} with an explicit relation
between this polynomial and the numbers of rooted maps with the same genus and
up to $6g-4$ edges. In this section $[x^n] S(x)$ denotes the coefficient of
$x^n$ in the formal power series $S(x)$. By convention, a sum over an empty
domain is equal to zero.

\begin{theorem}
\label{explicit:rational:edge:th}

For any positive integer $g$, the ordinary generating function $M_g(z)$ counting
rooted maps on a closed orientable surface of genus $g$ by number of edges
(exponent of $z$) is

\begin{equation}
M_g(z) = z^{2g} P_g(m) / F_g(m),
\label{qg:pattern:eq}
\end{equation}
where $m = \dfrac{1 - \sqrt{1-12z}}{6}$ and
$F_g(m)$ and $P_g(m)$ are the polynomials defined by 
\begin{equation}
F_g(m) = (1-2m)^{3g-2} (1-3m)^{2} (1-6m)^{5g-3} \label{Fg:def:eq}
\end{equation}
and
$P_g(m) = \sum_{l \leq 4g-4} p_{g,l} m^l$
with
\begin{equation}
p_{g,l} = \sum_{n =2g}^{6g-4} (-1)^{l-n} m_g(n) 
\sum_{i+j+k=l-n+2g}
 2^{i+k} 3^{j+k} {3g-2 \choose i}  {n-2g+2 \choose j}  {5g-3 \choose k}. 
 \label{pol:coeff:edge}
\end{equation}
\end{theorem}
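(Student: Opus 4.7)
The plan is to take Theorem~\ref{pattern:th} as the starting point, rewrite it as an explicit formula for $P_g(m)$ in terms of $M_g(z)$, and then extract the coefficient of $m^l$ by expanding three binomials.

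First I would derive the inverse relation between $z$ and $m$. From $m = (1-\sqrt{1-12z})/6$ one has $1-6m = \sqrt{1-12z}$, hence $(1-6m)^2 = 1-12z$, which gives
\begin{equation*}
z = m - 3m^2 = m(1-3m).
\end{equation*}
Consequently, for every $n \geq 2g$,
\begin{equation*}
z^{n-2g} = m^{n-2g}(1-3m)^{n-2g}.
\end{equation*}

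Next I would rearrange (\ref{qg:pattern:eq}) as $P_g(m) = F_g(m) \cdot z^{-2g} M_g(z)$, substitute $F_g(m) = (1-2m)^{3g-2}(1-3m)^2(1-6m)^{5g-3}$, and use the expansion $z^{-2g} M_g(z) = \sum_{n \geq 2g} m_g(n) m^{n-2g}(1-3m)^{n-2g}$ from the previous step, absorbing the factor $(1-3m)^2$ into the sum to get
\begin{equation*}
P_g(m) = (1-2m)^{3g-2}(1-6m)^{5g-3} \sum_{n \geq 2g} m_g(n)\, m^{n-2g} (1-3m)^{n-2g+2}.
\end{equation*}

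Then I would extract the coefficient of $m^l$ from each of the three polynomial factors $(1-2m)^{3g-2}$, $(1-3m)^{n-2g+2}$, $(1-6m)^{5g-3}$ by the binomial theorem, multiply, and collect. The typical term carries a factor $(-2)^i(-3)^j(-6)^k = (-1)^{i+j+k}\, 2^{i+k}\, 3^{j+k}$ with $i+j+k = l - n + 2g$, and since $2g$ is even, the sign simplifies to $(-1)^{l-n}$, matching (\ref{pol:coeff:edge}).

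The remaining observation, and the one small subtlety, is that the sum over $n$ is effectively finite. The constraint $i+j+k = l-n+2g \geq 0$ already forces $n \leq l + 2g$; combined with the hypothesis $l \leq 4g-4$ (which is guaranteed by Theorem~\ref{pattern:th}, since $\deg P_g \leq 4g-4$), this yields $n \leq 6g-4$. The lower limit $n \geq 2g$ comes from $m_g(n) = 0$ for $n < 2g$. No genuine obstacle arises; the main care is bookkeeping the exponents of $(1-3m)$ correctly, since the factor $(1-3m)^2$ in $F_g$ merges with the $(1-3m)^{n-2g}$ coming from the substitution $z = m(1-3m)$, and tracking the sign to confirm that $(-1)^{i+j+k}$ collapses to the clean form $(-1)^{l-n}$ in the statement.
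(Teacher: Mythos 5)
Your proposal is correct and follows essentially the same route as the paper's proof: rewrite Theorem~\ref{pattern:th} as $P_g(m) = z^{-2g}F_g(m)M_g(z)$, substitute $z = m(1-3m)$ to merge the $(1-3m)$ powers into $(1-3m)^{n-2g+2}$, and extract $[m^l]$ via the binomial theorem, with the sign collapsing to $(-1)^{l-n}$. Your explicit justification of the upper limit $n \le 6g-4$ from $i+j+k = l-n+2g \ge 0$ and $l \le 4g-4$ is a small bonus of care that the paper leaves implicit.
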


\begin{proof}
On the one hand, $M_g(z) = \sum_{n\geq 2g} m_g(n) z^n$
since $m_g(n) = 0$ if $n < 2g$. On the other hand, Theorem~\ref{pattern:th}
gives
$P_g(m) = z^{-2g} F_g(m) M_g(z)$,
where $z=m(1-3m)$. From both of these resuts we obtain 
\begin{equation}
P_g(m) = F_g(m) \sum_{n \geq 2g} m_g(n) (m(1-3m))^{n-2g}.
\label{Mg:edge:eq4}
\end{equation}
Since the degree of the polynomial $P_g(m)$ is at most $4g-4$, 
$P_g(m) = \sum_{l \leq 4g-4} p_{g,l} m^l$
with
$$
p_{g,l} = [m^l] P_g(m)
  =  \sum_{n \geq 2g}^{6g-4} m_g(n) [m^{l-n+2g}] (1-2m)^{3g-2} (1-3m)^{n-2g+2} 
 (1-6m)^{5g-3}.
$$
From $(1-am)^k = \sum_{i=0}^{k} {k \choose i} (-a)^{i} m^i$, it follows that
\begin{equation*}
p_{g,l} = \sum_{n =2g}^{6g-4} m_g(n)
\sum_{i+j+k=l-n+2g}
 {3g-2 \choose i} (-2)^i  
 {n-2g+2 \choose j} (-3)^j 
 {5g-3 \choose k} (-6)^k, 
\end{equation*}
which implies Formula (\ref{pol:coeff:edge}).
\end{proof}

The polynomial $P_1(m)$ can be derived from~\cite{Arq87a} and the polynomials
$P_2(m)$ and $P_3(m)$ from~\cite{BC91}. The polynomial $P_4(m)$ was first given
in~\cite{MG11}. All the polynomials $P_g(m)$ with $1 \leq g \leq 6$ can be found
in~\cite[Appendix B]{WG14}. They were computed by a complicated recurrence
formula involving additional parameters.
Theorem~\ref{explicit:rational:edge:th} and Carrell-Chapuy recurrence formula
(\ref{cc:edge:th}) provide a much more efficient way to compute the polynomials
$P_g(m)$.

\begin{theorem}
\label{fixed:genus:edge:rec}
For any positive integer $g$, the number $m_g(n)$ of rooted maps of
positive genus $g$ with $n$ edges  is recursively defined for $n \geq 6g-3$ by
\begin{equation}
m_g(n)  = \sum_{e = 2g}^{n-1} (-1)^{n-e-1} m_g(e)
            \sum_{i+j+k=n-e}  2^{j+k} 3^{i+k} {e-2g+2 \choose i}
            {3g-2 \choose j} {5g-3 \choose k}.
\label{mg:edge:rec}
\end{equation}
\end{theorem}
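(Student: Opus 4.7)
The plan is to reuse equation (\ref{Mg:edge:eq4}) from the proof of Theorem~\ref{explicit:rational:edge:th}, namely
\[
P_g(m) = F_g(m) \sum_{e \geq 2g} m_g(e) \bigl(m(1-3m)\bigr)^{e-2g},
\]
and exploit the degree bound $\deg P_g \leq 4g-4$: the coefficient of $m^N$ on the right-hand side must vanish for every integer $N \geq 4g-3$. Each summand
\[
F_g(m)\bigl(m(1-3m)\bigr)^{e-2g} = (1-2m)^{3g-2}(1-3m)^{e-2g+2}(1-6m)^{5g-3}\, m^{e-2g}
\]
is a polynomial whose lowest-degree term is $m^{e-2g}$ with coefficient $1$.

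To isolate $m_g(n)$, I would extract $[m^{n-2g}]$ from both sides, because this is precisely the largest $N$ for which the summand indexed by $e = n$ still contributes (namely its constant term $m_g(n)$), while all summands with $e > n$ drop out since their lowest powers exceed $m^{n-2g}$. The vanishing condition $N \geq 4g-3$ translates into the hypothesis $n \geq 6g-3$, and what remains is
\[
0 = m_g(n) + \sum_{e=2g}^{n-1} m_g(e) \, [m^{n-e}]\, (1-2m)^{3g-2}(1-3m)^{e-2g+2}(1-6m)^{5g-3}.
\]

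I would then expand the inner coefficient by applying $(1-am)^k = \sum_i \binom{k}{i}(-a)^i m^i$ to each of the three factors, pull out the overall sign $(-1)^{n-e}$ (valid since the three summation indices add to $n-e$), and rewrite $(-6)^k = (-1)^k 2^k 3^k$ to recover the factor $2^{j+k} 3^{i+k}$ that appears in the statement. Solving for $m_g(n)$ then yields formula~(\ref{mg:edge:rec}). The only non-trivial step is purely notational: one has to match summation indices to binomial coefficients carefully so that the expanded expression lines up with the symmetric-looking sum in the statement; there is no conceptual difficulty beyond the initial observation that $[m^{n-2g}]$ is the right coefficient to extract.
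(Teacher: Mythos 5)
Your proposal is correct and follows the paper's own argument essentially verbatim: both start from equation (\ref{Mg:edge:eq4}), use the degree bound on $P_g(m)$ to make the coefficient of $m^{n-2g}$ vanish for $n \geq 6g-3$, isolate the leading term $m_g(n)$, and expand the three binomial factors to collect the sign $(-1)^{n-e}$ and the powers $2^{j+k}3^{i+k}$. The only cosmetic difference is that the paper indexes the extracted coefficient as $[m^{N}]$ with $N = n-2g$ and isolates $m_g(N+2g)$, which is the same computation after a shift of variable.
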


\begin{proof}
 For $n \geq 4g-3$ it follows from (\ref{Mg:edge:eq4}) and the fact
that the degree of the polynomial $P_g(m)$ is at most $4g-4$  that
\begin{equation}
[m^n] \left( F_g(m) \sum_{e \geq 2g} m_g(e) (m(1-3m))^{e-2g}\right) = 0
\label{edge:eq5}
\end{equation}
i.e. 
\begin{equation}
 \sum_{e = 2g}^{n+2g} m_g(e) \, [m^{n-e+2g}] \left( 
  (1-2m)^{3g-2}  (1-3m)^{e-2g+2} (1-6m)^{5g-3} 
 \right) = 0.
\label{edge:eq7}
\end{equation}
By isolating $m_g(n+2g)$ in the left-hand side, we obtain
\begin{equation}
  m_g(n+2g) =  - \sum_{e = 2g}^{n+2g-1} m_g(e) \, [m^{n-e+2g}] \left( 
  (1-2m)^{3g-2}  (1-3m)^{e-2g+2} (1-6m)^{5g-3} 
 \right)
\label{edge:eq8}
\end{equation}
 and Formula (\ref{mg:edge:rec}) follows from $(1-am)^k = \sum_{i=0}^{k} {k
 \choose i} (-a)^{i} m^i$.
\end{proof}

Theorems~\ref{explicit:rational:edge:th} and~\ref{fixed:genus:edge:rec} also
imply that the series $(m_g(n))_{n \geq 0}$ of rooted maps of positive genus $g$
is uniquely determined by its first $6g-4$ values (among which the first $2g$
values are known to be $0$).

\begin{theorem}
\label{closed:genus:edge:nb}
For any positive integer $g$, the number $m_g(n)$ of rooted maps of
positive genus $g$ with $n$ edges is defined by the closed formula
\begin{equation}
 m_g(n) = \sum_{l=0}^{4g-4} p_{g,l} 
  \sum_{i+j+k=n-2g-l}  2^{i+k} 3^{j+k} 
   {i+3g-3 \choose i}   {j+n-2g+2 \choose j}  {k+5g-5 \choose k}  
   \label{closed:edge:formula}
\end{equation}
where the integers $p_{g,l}$ are defined in
Theorem~\ref{explicit:rational:edge:th}.
\end{theorem}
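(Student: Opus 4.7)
The plan is to extract $m_g(n) = [z^n] M_g(z)$ directly from the rational expression in Theorem~\ref{explicit:rational:edge:th} by applying Lagrange--B\"urmann inversion to the substitution $z = m(1-3m)$. Linearity in the coefficients $p_{g,l}$ will reduce the task to computing $[z^N]$ of a product of three ``geometric'' factors $(1-am)^{-r}$, whose Maclaurin coefficients are explicit binomials; regrouping the resulting convolution then matches~(\ref{closed:edge:formula}) on sight.

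First I would use $P_g(m) = \sum_{l=0}^{4g-4} p_{g,l}\, m^l$ together with $M_g(z) = z^{2g} P_g(m)/F_g(m)$ to write
$$m_g(n) \;=\; \sum_{l=0}^{4g-4} p_{g,l}\, [z^{n-2g}]\bigl(m^l/F_g(m)\bigr).$$
Then I would use the identity $m = z/(1-3m)$ (just a rewriting of $z=m(1-3m)$) to eliminate the monomial $m^l$ in favor of $z^l(1-3m)^{-l}$, absorbing the extra $(1-3m)^{-l}$ into $F_g(m)$. This yields
$$[z^{n-2g}]\bigl(m^l/F_g(m)\bigr) \;=\; [z^{n-2g-l}] H_l(m),$$
where $H_l(m) := (1-2m)^{-(3g-2)} (1-3m)^{-(l+2)} (1-6m)^{-(5g-3)}$ is a proper formal power series in $m$.

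Next I would invoke Lagrange--B\"urmann in the following form: since $dz/dm = 1-6m$, a residue computation at $m=0$ shows that, for every formal power series $H(m)$ and every integer $N \geq 0$,
$$[z^N] H(m) \;=\; [m^N]\, H(m)\,(1-6m)\,(1-3m)^{-(N+1)}.$$
Specializing $H=H_l$ and $N=n-2g-l$, the exponent of $(1-3m)$ collapses to $-(l+2)-(n-2g-l+1) = -(n-2g+3)$ and that of $(1-6m)$ to $-(5g-4)$, so the problem reduces to computing
$$[m^{n-2g-l}] (1-2m)^{-(3g-2)}(1-3m)^{-(n-2g+3)}(1-6m)^{-(5g-4)}.$$
Expanding each factor by the generalized binomial series $(1-am)^{-r} = \sum_i \binom{i+r-1}{i} a^i m^i$ produces a triple Cauchy convolution over $i+j+k = n-2g-l$; rewriting the prefactor $2^i\cdot 3^j\cdot 6^k$ as $2^{i+k} 3^{j+k}$ and summing over $l$ recovers~(\ref{closed:edge:formula}) term by term.

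The step that deserves the most care is formulating Lagrange--B\"urmann in the form displayed above, because the substitution $z = m(1-3m)$ has a non-trivial Jacobian $(1-6m)$ and also contributes a pole $(1-3m)^{-(N+1)}$ through $z^{-(N+1)}$; both factors must be tracked to land on the exponents $-(n-2g+3)$ and $-(5g-4)$. Everything else is routine multinomial bookkeeping, and the boundary cases ($n<2g$, where the triple sum is empty, and small $g$, where some binomial upper indices shrink) are handled automatically by the generalized-binomial conventions.
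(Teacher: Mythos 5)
Your proof is correct and follows essentially the same route as the paper's: both apply Lagrange inversion for the substitution $z=m(1-3m)$ (your residue form $[z^N]H(m)=[m^N]\,H(m)(1-6m)(1-3m)^{-(N+1)}$ is exactly the identity the paper invokes) and then expand the three negative-power factors by the generalized binomial series. The only difference --- converting $m^l$ to $z^l(1-3m)^{-l}$ before inverting, rather than shifting the $m$-index afterwards --- is cosmetic and lands on the identical coefficient extraction $[m^{n-2g-l}](1-2m)^{-(3g-2)}(1-3m)^{-(n-2g+3)}(1-6m)^{-(5g-4)}$.
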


\begin{proof}
By applying the Lagrange inversion formula~\cite[page 38]{Stanley2} to
(\ref{qg:pattern:eq}), we obtain 

$$[z^{n-2g}] \frac{M_g(z)}{z^{2g}} = [m^{n-2g}]
\frac{P_g(m) (1-6m)}{F_g(m) (1-3m)^{n-2g+1}}$$ i.e.
$$m_g(n) = [m^{n-2g}] \frac{P_g(m)}{(1-2m)^{3g-2} (1-3m)^{n-2g+3}
(1-6m)^{5g-4}}.$$
From $(1-x)^{-p} = \sum_{k \geq 0} {k+p-1 \choose k} x^k$ for $p \geq 0$, it
follows that
$$m_g(n) = \sum_{l=0}^{4g-4} p_{g,l} 
  \sum_{i+j+k=n-2g-l} 
  {i+3g-3 \choose i} 2^i 
  {j+n-2g+2 \choose j} 3^j 
  {k+5g-5 \choose k} 6^k$$
which implies Formula (\ref{closed:edge:formula}).
\end{proof}

As far as we know, these formulas are the first ones relating numbers of
genus-$g$ rooted maps only with other numbers of rooted maps of the same genus.
These formulas can be easily generalized to bivariate generating functions.
We did not use them for counting unrooted maps because they are
computationally less efficient than (\ref{cc:edge:th}).

\section{Counting unrooted maps}
\label{unrooted:sec}

The second author has written a program in C++ that counts rooted maps by genus,
number of edges and number of vertices using Formula (\ref{cc:edge:face:th})
(an optimized version of~\cite[Corollary 3]{CC14-v3}) and also counts
unrooted maps with these parameters.
A table of numbers and a text file of the source code are available  from the
second author on request and in release 0.4.0 of the MAP project~\cite{MAP}. The
source code can also be found in~\cite{Walsh14}. The method used to count
unrooted maps given a table of numbers of rooted maps was presented by A.
Mednykh and R. Nedela in~\cite{MN06} and refined by V. Liskovets
in~\cite{Liskovets10}. The second author used Liskovets' method. Details of the
calculations were presented in~\cite{WGM12}; a more pedagogical exposition can
be found in~\cite{W13}. For the sake of brevity we do not include the details
here. Suffice it to say that the program uses the software CLN to handle big
integers and the C/C++ compiler XCODE to run CLN; both of these software
packages can be downloaded free of charge from the internet.

\section{Complexity analysis and time trials}
\label{complex:sec}
Let $n$ be the largest number of edges in the maps to be enumerated. Then the
maximum genus of the maps is $\lfloor\frac{n}{2}\rfloor$, since a map of genus
$g$ must have at least $2g$ edges. The recurrence in~\cite{CC14-v3} uses
$O(n^2)$ arithmetic operations to obtain the number of rooted maps with at most
$n$ edges, counted by number of edges alone, and $O(n^3)$ arithmetic operations
if one is counting by number of vertices as well as edges. To construct a table
of numbers of rooted maps with up to $n$ edges takes $O(n^4)$ arithmetic
operations if we are counting by number of edges alone, since there are $O(n^2)$
entries in the table, and $O(n^6)$ arithmetic operations if we are counting by
number of vertices as well, since there are $O(n^3)$ entries in the table.

These asymptotic estimates are overoptimistic if we are considering CPU time
rather than number of arithmetic operations because we are working with big
numbers. The arithmetic operation executed most often in counting rooted maps is
multiplying one number of rooted maps by another one. The naïve method of
multiplying two numbers of size (number of bits) bounded by $b$ takes $O(b^2)$
bit operations. When the numbers get big enough, CLN uses the Schönhage-Strassen
method of multiplication~\cite{SS71}, which uses $O(b~\log~b~\log~\log~b)$ bit
operations. In~\cite{BC86} it was shown that the number of rooted maps of genus
$g$ with $n$ edges is asymptotic to $12n$ multiplied by a polynomial in $n$
whose degree depends linearly on $g$. It follows that the number of bits in such a
number is in $O(n + g~\log n)$, and since $g$ is in $O(n)$, the size of the
number is in $O(n \log n)$. The true cost of counting rooted maps with up to $n$
edges is found by multiplying the number of arithmetic operations by $O(n^2 (\log
 n)^2)$ if naïve multiplication is used and by $O(n(\log n)^2 \log \log n)$ if the
 Schönhage-Strassen method is used.

The second author's program calculates $m_g(n,f)$ for $g$ going from $0$ to a
user-defined maximum, for $n$ going from $2g$ to a user-defined maximum and from
$f$ going from $1$ to a maximum that makes $v = n - f + 2(1-g) = 1$ and
reinterprets $f$ as the number of vertices rather than the number of faces (by
face-vertex duality). He
conducted time trials on a portable Macintosh with a $2.66$ GHz Intel Core $2$
Duo processor, making $n$ range from $20$ to $100$ in steps of $10$ and setting
the maximum value of $g$ to be $n/2$. Here is the time in seconds for counting
rooted maps for each of these values of $n$ ($0$ means too small to be
measured).

\begin{center}
\begin{tabular}{r||r|r|r|r|r|r|r|r|r}
n    & 20 & 30 & 40 & 50 & 60 & 70 & 80 & 90 & 100
\\
\hline
time & 0  &  0 & 0.5 & 5 & 15 & 37 & 85 & 190 & 322
\end{tabular} 
\end{center}

We do not have a time-complexity analysis for counting unrooted maps, but in the
time trials it took less time to count unrooted maps than rooted ones. For $n =
100$, the time to count unrooted maps was $178$ seconds.

\section{Acknowledgments}
The authors wish to thank Valery Liskovets for bringing~\cite{CC14-v3} 
to their attention, and Alexander Mednykh for helpful comments.

\bibliographystyle{hplain}





\appendix

\newpage
\section*{Appendix: Numbers of unrooted maps}
\label{unrooted:appendix}
A table of numbers of unrooted planar maps (genus 0) with up to 6 edges
can be found in~\cite{Walsh83b}.  Larger tables of numbers of
 unrooted planar maps were computed by N.C. Wormald and given privately to the
 second author, but as far as we know, they have never been published.
A table of numbers of unrooted maps of genus 1-5 with up to 11 edges appears
in~\cite{W13} and~\cite{WGM12}.
We extend this table, both in terms of genus and number of edges, in this
appendix.

The following table gives the numbers $u_g(e,v)$ of genus-$g$ unrooted maps with
$e$ edges and $v$ vertices, for $g$ from $0$ to $19$ and for $v$ from $1$ to its
maximal value $e+1-2g$. The minimal value of $e$ is $2g$. The maximal value of
$e$ is arbitrarily fixed so that the table fits five pages for genera $0$ to
$2$, two pages for genera $3$ to $5$, and one page for higher genera.
The maximal value for $g$ is such that numbers fit in the page width.

\begin{longtable}{r|r|rrr}
$e$ & $v$ & $u_{0}(e,v)$ & $u_{1}(e,v)$ & $u_{2}(e,v)$ \\
\hline
\endhead
 & & & \\
0 & 1 & 1 & &  \\
 & & & \\
0 & sum  & 1  &  &  \\
 & & & \\
1 & 1 & 1 & &  \\
1 & 2 & 1 & &  \\
 & & & \\
1 & sum  & 2  &  &  \\
 & & & \\
2 & 1 & 1 & 1 &  \\
2 & 2 & 2 & &  \\
2 & 3 & 1 & &  \\
 & & & \\
2 & sum  & 4  & 1  &  \\
 & & & \\
3 & 1 & 2 & 3 &  \\
3 & 2 & 5 & 3 &  \\
3 & 3 & 5 & &  \\
3 & 4 & 2 & &  \\
 & & & \\
3 & sum  & 14  & 6  &  \\
 & & & \\
4 & 1 & 3 & 11 & 4  \\
4 & 2 & 14 & 24 &  \\
4 & 3 & 23 & 11 &  \\
4 & 4 & 14 & &  \\
4 & 5 & 3 & &  \\
 & & & \\
4 & sum  & 57  & 46  & 4  \\
 & & & \\
5 & 1 & 6 & 46 & 53  \\
5 & 2 & 42 & 180 & 53  \\
5 & 3 & 108 & 180 &  \\
5 & 4 & 108 & 46 &  \\
5 & 5 & 42 & &  \\
5 & 6 & 6 & &  \\
 & & & \\
5 & sum  & 312  & 452  & 106  \\
 & & & \\
6 & 1 & 14 & 204 & 553  \\
6 & 2 & 140 & 1198 & 1276  \\
6 & 3 & 501 & 2048 & 553  \\
6 & 4 & 761 & 1198 &  \\
6 & 5 & 501 & 204 &  \\
6 & 6 & 140 & &  \\
6 & 7 & 14 & &  \\
 & & & \\
6 & sum  & 2071  & 4852  & 2382  \\
 & & & \\
7 & 1 & 34 & 878 & 4758  \\
7 & 2 & 473 & 7212 & 18582  \\
7 & 3 & 2264 & 18396 & 18582  \\
7 & 4 & 4744 & 18396 & 4758  \\
7 & 5 & 4744 & 7212 &  \\
7 & 6 & 2264 & 878 &  \\
7 & 7 & 473 & &  \\
7 & 8 & 34 & &  \\
 & & & \\
7 & sum  & 15030  & 52972  & 46680  \\
 & & & \\
8 & 1 & 95 & 3799 & 35778  \\
8 & 2 & 1670 & 40776 & 205867  \\
8 & 3 & 10087 & 142727 & 347558  \\
8 & 4 & 27768 & 212443 & 205867  \\
8 & 5 & 38495 & 142727 & 35778  \\
8 & 6 & 27768 & 40776 &  \\
8 & 7 & 10087 & 3799 &  \\
8 & 8 & 1670 & &  \\
8 & 9 & 95 & &  \\
 & & & \\
8 & sum  & 117735  & 587047  & 830848  \\
 & & & \\
9 & 1 & 280 & 16304 & 244246  \\
9 & 2 & 5969 & 219520 & 1910756  \\
9 & 3 & 44310 & 999232 & 4747430  \\
9 & 4 & 153668 & 2040348 & 4747430  \\
9 & 5 & 279698 & 2040348 & 1910756  \\
9 & 6 & 279698 & 999232 & 244246  \\
9 & 7 & 153668 & 219520 &  \\
9 & 8 & 44310 & 16304 &  \\
9 & 9 & 5969 & &  \\
9 & 10 & 280 & &  \\
 & & & \\
9 & sum  & 967850  & 6550808  & 13804864  \\
 & & & \\
10 & 1 & 854 & 69486 & 1552834  \\
10 & 2 & 21679 & 1139075 & 15680071  \\
10 & 3 & 192444 & 6488604 & 52969260  \\
10 & 4 & 816661 & 17227356 & 77948670  \\
10 & 5 & 1873638 & 23634214 & 52969260  \\
10 & 6 & 2458264 & 17227356 & 15680071  \\
10 & 7 & 1873638 & 6488604 & 1552834  \\
10 & 8 & 816661 & 1139075 &  \\
10 & 9 & 192444 & 69486 &  \\
10 & 10 & 21679 & &  \\
10 & 11 & 854 & &  \\
 & & & \\
10 & sum  & 8268816  & 73483256  & 218353000  \\
 & & & \\
11 & 1 & 2694 & 294350 & 9349284  \\
11 & 2 & 79419 & 5741220 & 117450580  \\
11 & 3 & 828176 & 39779852 & 512308352  \\
11 & 4 & 4200980 & 132209016 & 1025303224  \\
11 & 5 & 11795964 & 235876296 & 1025303224  \\
11 & 6 & 19509632 & 235876296 & 512308352  \\
11 & 7 & 19509632 & 132209016 & 117450580  \\
11 & 8 & 11795964 & 39779852 & 9349284  \\
11 & 9 & 4200980 & 5741220 &  \\
11 & 10 & 828176 & 294350 &  \\
11 & 11 & 79419 & &  \\
11 & 12 & 2694 & &  \\
 & & & \\
11 & sum  & 72833730  & 827801468  & 3328822880  \\
 & & & \\
12 & 1 & 8714 & 1240308 & 53919954  \\
12 & 2 & 293496 & 28271474 & 819971501  \\
12 & 3 & 3537311 & 233068938 & 4452289504  \\
12 & 4 & 21061347 & 942568684 & 11509375296  \\
12 & 5 & 70719843 & 2105637162 & 15654660302  \\
12 & 6 & 143157616 & 2738550608 & 11509375296  \\
12 & 7 & 180492486 & 2105637162 & 4452289504  \\
12 & 8 & 143157616 & 942568684 & 819971501  \\
12 & 9 & 70719843 & 233068938 & 53919954  \\
12 & 10 & 21061347 & 28271474 &  \\
12 & 11 & 3537311 & 1240308 &  \\
12 & 12 & 293496 & &  \\
12 & 13 & 8714 & &  \\
 & & & \\
12 & sum  & 658049140  & 9360123740  & 49325772812  \\
 & & & \\
13 & 1 & 28640 & 5202148 & 300331878  \\
13 & 2 & 1091006 & 136580200 & 5412601192  \\
13 & 3 & 15014328 & 1316388936 & 35599161080  \\
13 & 4 & 103369288 & 6337310504 & 114602018272  \\
13 & 5 & 407569560 & 17232289072 & 201379328048  \\
13 & 6 & 986878680 & 28066908912 & 201379328048  \\
13 & 7 & 1523077528 & 28066908912 & 114602018272  \\
13 & 8 & 1523077528 & 17232289072 & 35599161080  \\
13 & 9 & 986878680 & 6337310504 & 5412601192  \\
13 & 10 & 407569560 & 1316388936 & 300331878  \\
13 & 11 & 103369288 & 136580200 &  \\
13 & 12 & 15014328 & 5202148 &  \\
13 & 13 & 1091006 & &  \\
13 & 14 & 28640 & &  \\
 & & & \\
13 & sum  & 6074058060  & 106189359544  & 714586880940  \\
 & & & \\
14 & 1 & 95640 & 21733696 & 1625426118  \\
14 & 2 & 4078213 & 649405334 & 34132653009  \\
14 & 3 & 63397256 & 7213525316 & 266220537080  \\
14 & 4 & 498495378 & 40620565952 & 1038541797978  \\
14 & 5 & 2273702888 & 131529397536 & 2273175492192  \\
14 & 6 & 6466334844 & 260810488496 & 2936946412728  \\
14 & 7 & 11939378311 & 326638072204 & 2273175492192  \\
14 & 8 & 14615468757 & 260810488496 & 1038541797978  \\
14 & 9 & 11939378311 & 131529397536 & 266220537080  \\
14 & 10 & 6466334844 & 40620565952 & 34132653009  \\
14 & 11 & 2273702888 & 7213525316 & 1625426118  \\
14 & 12 & 498495378 & 649405334 &  \\
14 & 13 & 63397256 & 21733696 &  \\
14 & 14 & 4078213 & &  \\
14 & 15 & 95640 & &  \\
 & & & \\
14 & sum  & 57106433817  & 1208328304864  & 10164338225482  \\
 & & & \\
15 & 1 & 323396 & 90493272 & 8587132844  \\
15 & 2 & 15312150 & 3046454992 & 207220225668  \\
15 & 3 & 266509050 & 38537828328 & 1884416656912  \\
15 & 4 & 2368459404 & 250230575696 & 8721265531848  \\
15 & 5 & 12343172450 & 948078314200 & 23138230175172  \\
15 & 6 & 40620147828 & 2239126384800 & 37241985748964  \\
15 & 7 & 88106500004 & 3414411073976 & 37241985748964  \\
15 & 8 & 129045594524 & 3414411073976 & 23138230175172  \\
15 & 9 & 129045594524 & 2239126384800 & 8721265531848  \\
15 & 10 & 88106500004 & 948078314200 & 1884416656912  \\
15 & 11 & 40620147828 & 250230575696 & 207220225668  \\
15 & 12 & 12343172450 & 38537828328 & 8587132844  \\
15 & 13 & 2368459404 & 3046454992 &  \\
15 & 14 & 266509050 & 90493272 &  \\
15 & 15 & 15312150 & &  \\
15 & 16 & 323396 & &  \\
 & & & \\
15 & sum  & 545532037612  & 13787042250528  & 142403410942816  \\
 & & & \\
16 & 1 & 1105335 & 375691885 & 44442582224  \\
16 & 2 & 57721030 & 14127535004 & 1218291353547  \\
16 & 3 & 1116113327 & 201485902915 & 12739188485210  \\
16 & 4 & 11110947214 & 1490633731778 & 68769605322980  \\
16 & 5 & 65472242053 & 6514453678793 & 216512936399236  \\
16 & 6 & 246254877247 & 18006841322290 & 422468300097440  \\
16 & 7 & 618198141193 & 32708686628027 & 526326450852812  \\
16 & 8 & 1063785332489 & 39826928417305 & 422468300097440  \\
16 & 9 & 1272842946261 & 32708686628027 & 216512936399236  \\
16 & 10 & 1063785332489 & 18006841322290 & 68769605322980  \\
16 & 11 & 618198141193 & 6514453678793 & 12739188485210  \\
16 & 12 & 246254877247 & 1490633731778 & 1218291353547  \\
16 & 13 & 65472242053 & 201485902915 & 44442582224  \\
16 & 14 & 11110947214 & 14127535004 &  \\
16 & 15 & 1116113327 & 375691885 &  \\
16 & 16 & 57721030 & &  \\
16 & 17 & 1105335 & &  \\
 & & & \\
16 & sum  & 5284835906037  & 157700137398689  & 1969831979334086  \\
 & & & \\
17 & 1 & 3813798 & 1555771800 & 225971343444  \\
17 & 2 & 218333832 & 64863745520 & 6968346176400  \\
17 & 3 & 4658894160 & 1033998837648 & 82820994884096  \\
17 & 4 & 51553861024 & 8628535594224 & 514298358102592  \\
17 & 5 & 340432303072 & 42979642352848 & 1889177369500464  \\
17 & 6 & 1448203830304 & 137064797207600 & 4375155072009488  \\
17 & 7 & 4155977725664 & 291433805486672 & 6608420098046976  \\
17 & 8 & 8278116804032 & 422739334207920 & 6608420098046976  \\
17 & 9 & 11637788525696 & 422739334207920 & 4375155072009488  \\
17 & 10 & 11637788525696 & 291433805486672 & 1889177369500464  \\
17 & 11 & 8278116804032 & 137064797207600 & 514298358102592  \\
17 & 12 & 4155977725664 & 42979642352848 & 82820994884096  \\
17 & 13 & 1448203830304 & 8628535594224 & 6968346176400  \\
17 & 14 & 340432303072 & 1033998837648 & 225971343444  \\
17 & 15 & 51553861024 & 64863745520 &  \\
17 & 16 & 4658894160 & 1555771800 &  \\
17 & 17 & 218333832 & &  \\
17 & 18 & 3813798 & &  \\
 & & & \\
17 & sum  & 51833908183164  & 1807893066408464  & 26954132420126920  \\
 & & & \\
18 & 1 & 13269146 & 6428291934 & 1131367963884  \\
18 & 2 & 828408842 & 295221527717 & 38919384594398  \\
18 & 3 & 19391786118 & 5221086204768 & 520644158094148  \\
18 & 4 & 236921843193 & 48720710849424 & 3676241660447931  \\
18 & 5 & 1739717050754 & 273824061235756 & 15538149312306360  \\
18 & 6 & 8295898355134 & 995529273862210 & 41992192647331392  \\
18 & 7 & 26931885143228 & 2442526267219360 & 75288406812106052  \\
18 & 8 & 61331742226722 & 4147624456667366 & 91282155067903038  \\
18 & 9 & 99813869859301 & 4941186214175258 & 75288406812106052  \\
18 & 10 & 117278995153034 & 4147624456667366 & 41992192647331392  \\
18 & 11 & 99813869859301 & 2442526267219360 & 15538149312306360  \\
18 & 12 & 61331742226722 & 995529273862210 & 3676241660447931  \\
18 & 13 & 26931885143228 & 273824061235756 & 520644158094148  \\
18 & 14 & 8295898355134 & 48720710849424 & 38919384594398  \\
18 & 15 & 1739717050754 & 5221086204768 & 1131367963884  \\
18 & 16 & 236921843193 & 295221527717 &  \\
18 & 17 & 19391786118 & 6428291934 &  \\
18 & 18 & 828408842 & &  \\
18 & 19 & 13269146 & &  \\
 & & & \\
18 & sum  & 514019531037910  & 20768681225892328  & 365393525753591368  \\
\end{longtable}

\newpage

\begin{longtable}{r|r|rrr}
$e$ & $v$ & $u_{3}(e,v)$ & $u_{4}(e,v)$ & $u_{5}(e,v)$ \\
\hline
\endhead
 & & & \\
6 & 1 & 131 & &  \\
 & & & \\
6 & sum  & 131  &  &  \\
 & & & \\
7 & 1 & 4079 & &  \\
7 & 2 & 4079 & &  \\
 & & & \\
7 & sum  & 8158  &  &  \\
 & & & \\
8 & 1 & 73282 & 14118 &  \\
8 & 2 & 167047 & &  \\
8 & 3 & 73282 & &  \\
 & & & \\
8 & sum  & 313611  & 14118  &  \\
 & & & \\
9 & 1 & 970398 & 684723 &  \\
9 & 2 & 3693031 & 684723 &  \\
9 & 3 & 3693031 & &  \\
9 & 4 & 970398 & &  \\
 & & & \\
9 & sum  & 9326858  & 1369446  &  \\
 & & & \\
10 & 1 & 10556722 & 17586433 & 2976853  \\
10 & 2 & 58591595 & 39630698 &  \\
10 & 3 & 97799324 & 17586433 &  \\
10 & 4 & 58591595 & &  \\
10 & 5 & 10556722 & &  \\
 & & & \\
10 & sum  & 236095958  & 74803564  & 2976853  \\
 & & & \\
11 & 1 & 99944546 & 319763792 & 195644427  \\
11 & 2 & 748976684 & 1192082898 & 195644427  \\
11 & 3 & 1823736772 & 1192082898 &  \\
11 & 4 & 1823736772 & 319763792 &  \\
11 & 5 & 748976684 & &  \\
11 & 6 & 99944546 & &  \\
 & & & \\
11 & sum  & 5345316004  & 3023693380  & 391288854  \\
 & & & \\
12 & 1 & 852737424 & 4631706389 & 6623379011  \\
12 & 2 & 8205279051 & 25016739573 & 14789629444  \\
12 & 3 & 26989340556 & 41395800249 & 6623379011  \\
12 & 4 & 39378084524 & 25016739573 &  \\
12 & 5 & 26989340556 & 4631706389 &  \\
12 & 6 & 8205279051 & &  \\
12 & 7 & 852737424 & &  \\
 & & & \\
12 & sum  & 111472798586  & 100692692173  & 28036387466  \\
 & & & \\
13 & 1 & 6709209232 & 56946090696 & 155182455738  \\
13 & 2 & 79996972480 & 413223640688 & 569441291708  \\
13 & 3 & 338043951088 & 991010148804 & 569441291708  \\
13 & 4 & 666422524608 & 991010148804 & 155182455738  \\
13 & 5 & 666422524608 & 413223640688 &  \\
13 & 6 & 338043951088 & 56946090696 &  \\
13 & 7 & 79996972480 & &  \\
13 & 8 & 6709209232 & &  \\
 & & & \\
13 & sum  & 2182345314816  & 2922359760376  & 1449247494892  \\
 & & & \\
14 & 1 & 49461969282 & 617936108012 & 2841197873030  \\
14 & 2 & 711640778177 & 5734881201032 & 15028479073373  \\
14 & 3 & 3728403936278 & 18485468237252 & 24701811831354  \\
14 & 4 & 9445619348392 & 26795029196244 & 15028479073373  \\
14 & 5 & 12763979300656 & 18485468237252 & 2841197873030  \\
14 & 6 & 9445619348392 & 5734881201032 &  \\
14 & 7 & 3728403936278 & 617936108012 &  \\
14 & 8 & 711640778177 & &  \\
14 & 9 & 49461969282 & &  \\
 & & & \\
14 & sum  & 40634231364914  & 76471600288836  & 60441165724160  \\
 & & & \\
15 & 1 & 345667110726 & 6074397541996 & 43425763829620  \\
15 & 2 & 5878587435378 & 69634518493584 & 307366103788730  \\
15 & 3 & 37184192378506 & 287198334481908 & 728458658338820  \\
15 & 4 & 116833971177188 & 559637322350992 & 728458658338820  \\
15 & 5 & 202918633990626 & 559637322350992 & 307366103788730  \\
15 & 6 & 202918633990626 & 287198334481908 & 43425763829620  \\
15 & 7 & 116833971177188 & 69634518493584 &  \\
15 & 8 & 37184192378506 & 6074397541996 &  \\
15 & 9 & 5878587435378 & &  \\
15 & 10 & 345667110726 & &  \\
 & & & \\
15 & sum  & 726322104184848  & 1845089145736960  & 2158501051914340  \\
 & & & \\
16 & 1 & 2310028835346 & 55099526091224 & 577374933310906  \\
16 & 2 & 45675916449962 & 760174730620316 & 5209797503498640  \\
16 & 3 & 341686270713324 & 3874407685623078 & 16527742407430762  \\
16 & 4 & 1296601404482135 & 9662433645931070 & 23833896316372268  \\
16 & 5 & 2793465994063884 & 12990353144165406 & 16527742407430762  \\
16 & 6 & 3590596058829058 & 9662433645931070 & 5209797503498640  \\
16 & 7 & 2793465994063884 & 3874407685623078 & 577374933310906  \\
16 & 8 & 1296601404482135 & 760174730620316 &  \\
16 & 9 & 341686270713324 & 55099526091224 &  \\
16 & 10 & 45675916449962 & &  \\
16 & 11 & 2310028835346 & &  \\
 & & & \\
16 & sum  & 12550075287918360  & 41694584320696782  & 68463726004852884  \\
\end{longtable}

\newpage

\begin{longtable}{r|r|rr}
$e$ & $v$ & $u_{6}(e,v)$ & $u_{7}(e,v)$ \\
\hline
\endhead
 & & \\
12 & 1 & 1013582110 &  \\
 & & \\
12 & sum  & 1013582110  &  \\
 & & \\
13 & 1 & 84928729933 &  \\
13 & 2 & 84928729933 &  \\
 & & \\
13 & sum  & 169857459866  &  \\
 & & \\
14 & 1 & 3605028726801 & 508233789579  \\
14 & 2 & 7992502487664 &  \\
14 & 3 & 3605028726801 &  \\
 & & \\
14 & sum  & 15202559941266  & 508233789579  \\
 & & \\
15 & 1 & 104340300511680 & 52147993673063  \\
15 & 2 & 378134298777037 & 52147993673063  \\
15 & 3 & 378134298777037 &  \\
15 & 4 & 104340300511680 &  \\
 & & \\
15 & sum  & 964949198577434  & 104295987346126  \\
 & & \\
16 & 1 & 2328771846722608 & 2680480846764174  \\
16 & 2 & 12115285934958463 & 5908630695597199  \\
16 & 3 & 19807266932574138 & 2680480846764174  \\
16 & 4 & 12115285934958463 &  \\
16 & 5 & 2328771846722608 &  \\
 & & \\
16 & sum  & 48695382495936280  & 11269592389125547  \\
 & & \\
17 & 1 & 42879330119010060 & 92968027407241048  \\
17 & 2 & 297515608385017712 & 333529278137138064  \\
17 & 3 & 698425724113143808 & 333529278137138064  \\
17 & 4 & 698425724113143808 & 92968027407241048  \\
17 & 5 & 297515608385017712 &  \\
17 & 6 & 42879330119010060 &  \\
 & & \\
17 & sum  & 2077641325234343160  & 852994611088758224  \\
 & & \\
18 & 1 & 679574571686566150 & 2462686849706956592  \\
18 & 2 & 5994737178116108922 & 12639396448986592872  \\
18 & 3 & 18774176953946323287 & 20573712843056206498  \\
18 & 4 & 26959977164375096074 & 12639396448986592872  \\
18 & 5 & 18774176953946323287 & 2462686849706956592  \\
18 & 6 & 5994737178116108922 &  \\
18 & 7 & 679574571686566150 &  \\
 & & \\
18 & sum  & 77856954571873092792  & 50777879440443305426  \\
\end{longtable}

\newpage

\begin{longtable}{r|r|rr}
$e$ & $v$ & $u_{8}(e,v)$ & $u_{9}(e,v)$ \\
\hline
\endhead
 & & \\
16 & 1 & 352755124921122 &  \\
 & & \\
16 & sum  & 352755124921122  &  \\
 & & \\
17 & 1 & 43058443920636593 &  \\
17 & 2 & 43058443920636593 &  \\
 & & \\
17 & sum  & 86116887841273186  &  \\
 & & \\
18 & 1 & 2612103505736970587 & 324039613564554401  \\
18 & 2 & 5730580864933991642 &  \\
18 & 3 & 2612103505736970587 &  \\
 & & \\
18 & sum  & 10954787876407932816  & 324039613564554401  \\
 & & \\
19 & 1 & 106104636805432131380 & 46037869184438374355  \\
19 & 2 & 377468533878532051274 & 46037869184438374355  \\
19 & 3 & 377468533878532051274 &  \\
19 & 4 & 106104636805432131380 &  \\
 & & \\
19 & sum  & 967146341367928365308  & 92075738368876748710  \\
 & & \\
20 & 1 & 3268090017604446925695 & 3231706843486368031963  \\
20 & 2 & 16583906258119918465914 & 7061507183694710755564  \\
20 & 3 & 26895334324381935980135 & 3231706843486368031963  \\
20 & 4 & 16583906258119918465914 &  \\
20 & 5 & 3268090017604446925695 &  \\
 & & \\
20 & sum  & 66599326875830666763353  & 13524920870667446819490  \\
 & & \\
21 & 1 & 81763508749452267702334 & 151020126911739994806940  \\
21 & 2 & 550484901682834216964372 & 533436706524721228557255  \\
21 & 3 & 1273683419173516774041758 & 533436706524721228557255  \\
21 & 4 & 1273683419173516774041758 & 151020126911739994806940  \\
21 & 5 & 550484901682834216964372 &  \\
21 & 6 & 81763508749452267702334 &  \\
 & & \\
21 & sum  & 3811863659211606517416928  & 1368913666872922446728390  \\
 & & \\
22 & 1 & 1735799012483201542629310 & 5321407675084935890385252  \\
22 & 2 & 14793365548485479622939589 & 26743956334292711312949466  \\
22 & 3 & 45422458688847126828542788 & 43236837587662714557906902  \\
22 & 4 & 64807620764474890716233843 & 26743956334292711312949466  \\
22 & 5 & 45422458688847126828542788 & 5321407675084935890385252  \\
22 & 6 & 14793365548485479622939589 &  \\
22 & 7 & 1735799012483201542629310 &  \\
 & & \\
22 & sum  & 188710867264106506704457217  & 107367565606418008964576338  \\
\end{longtable}

\newpage

\begin{longtable}{r|r|rr}
$e$ & $v$ & $u_{10}(e,v)$ & $u_{11}(e,v)$ \\
\hline
\endhead
 & & \\
20 & 1 & 380751174738424280720 &  \\
 & & \\
20 & sum  & 380751174738424280720  &  \\
 & & \\
21 & 1 & 61900350644739074439445 &  \\
21 & 2 & 61900350644739074439445 &  \\
 & & \\
21 & sum  & 123800701289478148878890  &  \\
 & & \\
22 & 1 & 4950082376594691225742201 & 557175918657122229139987  \\
22 & 2 & 10779106107210130980277396 &  \\
22 & 3 & 4950082376594691225742201 &  \\
 & & \\
22 & sum  & 20679270860399513431761798  & 557175918657122229139987  \\
 & & \\
23 & 1 & 262334467319926285470894622 & 102246856493968374607463423  \\
23 & 2 & 920946297304483463377091334 & 102246856493968374607463423  \\
23 & 3 & 920946297304483463377091334 &  \\
23 & 4 & 262334467319926285470894622 &  \\
 & & \\
23 & sum  & 2366561529248819497695971912  & 204493712987936749214926846  \\
 & & \\
24 & 1 & 10436807330236403989810496394 & 9197643937243060077009264642  \\
24 & 2 & 52016489918140456428526385133 & 19968543728518640843922922089  \\
24 & 3 & 83866835662326132082607669996 & 9197643937243060077009264642  \\
24 & 4 & 52016489918140456428526385133 &  \\
24 & 5 & 10436807330236403989810496394 &  \\
 & & \\
24 & sum  & 208773430159079852919281433050  & 38363831603004760997941451373  \\
 & & \\
25 & 1 & 334074519898673454431872772378 & 546349421734820701894788862980  \\
25 & 2 & 2200395535965333545033744718386 & 1907742061916852507697868346104  \\
25 & 3 & 5037409011844160996918020411320 & 1907742061916852507697868346104  \\
25 & 4 & 5037409011844160996918020411320 & 546349421734820701894788862980  \\
25 & 5 & 2200395535965333545033744718386 &  \\
25 & 6 & 334074519898673454431872772378 &  \\
 & & \\
25 & sum  & 15143758135416335992767275804168  & 4908182967303346419185314418168  \\
 & & \\
26 & 1 & 8992412804931496094769804314194 & 24276552615926015429243306726942  \\
26 & 2 & 74724657022260381172998180758989 & 120111902847763968111649111952181  \\
26 & 3 & 226115216635966966996943064786668 & 193197660432676247606899872716724  \\
26 & 4 & 321078006189133085356572309019684 & 120111902847763968111649111952181  \\
26 & 5 & 226115216635966966996943064786668 & 24276552615926015429243306726942  \\
26 & 6 & 74724657022260381172998180758989 &  \\
26 & 7 & 8992412804931496094769804314194 &  \\
 & & \\
26 & sum  & 940742579115450773885994408739386  & 481974571360056214688684710074970  \\
\end{longtable}

\newpage

\begin{longtable}{r|r|r}
$e$ & $v$ & $u_{12}(e,v)$ \\
\hline
\endhead
 & \\
24 & 1 & 993806827312044893602464496  \\
 & \\
24 & sum  & 993806827312044893602464496  \\
 & \\
25 & 1 & 203568251472192593015565105153  \\
25 & 2 & 203568251472192593015565105153  \\
 & \\
25 & sum  & 407136502944385186031130210306  \\
 & \\
26 & 1 & 20384681425578629630065436540001  \\
26 & 2 & 44139689150396597299844837950650  \\
26 & 3 & 20384681425578629630065436540001  \\
 & \\
26 & sum  & 84909052001553856559975711030652  \\
 & \\
27 & 1 & 1344032802282918564446470093660642  \\
27 & 2 & 4670923997634519162591788341200373  \\
27 & 3 & 4670923997634519162591788341200373  \\
27 & 4 & 1344032802282918564446470093660642  \\
 & \\
27 & sum  & 12029913599834875454076516869722030  \\
 & \\
28 & 1 & 66095585390798198366295835295463407  \\
28 & 2 & 324909698939260325755902147746674575  \\
28 & 3 & 521510766921112908280233047150555771  \\
28 & 4 & 324909698939260325755902147746674575  \\
28 & 5 & 66095585390798198366295835295463407  \\
 & \\
28 & sum  & 1303521335581229956524629013234831735  \\
 & \\
29 & 1 & 2598647372085586327745995717592768700  \\
29 & 2 & 16829619487924984024663810973279277072  \\
29 & 3 & 38214110448983922489609206857083746964  \\
29 & 4 & 38214110448983922489609206857083746964  \\
29 & 5 & 16829619487924984024663810973279277072  \\
29 & 6 & 2598647372085586327745995717592768700  \\
 & \\
29 & sum  & 115284754617988985684038027095911585472  \\
 & \\
30 & 1 & 85392758017801624687683117609052737636  \\
30 & 2 & 695858144741566474009926149792630696265  \\
30 & 3 & 2081944949854120175333709476140932999277  \\
30 & 4 & 2945354467291151637150567124347940326546  \\
30 & 5 & 2081944949854120175333709476140932999277  \\
30 & 6 & 695858144741566474009926149792630696265  \\
30 & 7 & 85392758017801624687683117609052737636  \\
 & \\
30 & sum  & 8671746172518128185213204611433173192902  \\
\end{longtable}

\newpage

\begin{longtable}{r|r|r}
$e$ & $v$ & $u_{13}(e,v)$ \\
\hline
\endhead
 & \\
26 & 1 & 2122669454233128302149617542253  \\
 & \\
26 & sum  & 2122669454233128302149617542253  \\
 & \\
27 & 1 & 480832429153352742558421356793665  \\
27 & 2 & 480832429153352742558421356793665  \\
 & \\
27 & sum  & 961664858306705485116842713587330  \\
 & \\
28 & 1 & 53130366325356854395291828009938727  \\
28 & 2 & 114775986070991484071278106575375896  \\
28 & 3 & 53130366325356854395291828009938727  \\
 & \\
28 & sum  & 221036718721705192861861762595253350  \\
 & \\
29 & 1 & 3856356911137003381320301936446461074  \\
29 & 2 & 13345600579548980014352553597690495308  \\
29 & 3 & 13345600579548980014352553597690495308  \\
29 & 4 & 3856356911137003381320301936446461074  \\
 & \\
29 & sum  & 34403914981371966791345711068273912764  \\
 & \\
30 & 1 & 208265988956285322041288424854784582594  \\
30 & 2 & 1017889232438589381522421291029823820959  \\
30 & 3 & 1630734704264982296160323672383390044428  \\
30 & 4 & 1017889232438589381522421291029823820959  \\
30 & 5 & 208265988956285322041288424854784582594  \\
 & \\
30 & sum  & 4083045147054731703287743104152606851534  \\
 & \\
31 & 1 & 8970722042074945753967336008761272150320  \\
31 & 2 & 57686177099045026849910147518610059988352  \\
31 & 3 & 130533817193066912073725928877428968894744  \\
31 & 4 & 130533817193066912073725928877428968894744  \\
31 & 5 & 57686177099045026849910147518610059988352  \\
31 & 6 & 8970722042074945753967336008761272150320  \\
 & \\
31 & sum  & 394381432668373769355206824809600602066832  \\
 & \\
32 & 1 & 322189870761730650250511467871042510892076  \\
32 & 2 & 2603945884819023919715984436445634621702948  \\
32 & 3 & 7753587725454287210291908773219679038077036  \\
32 & 4 & 10951890382776236471789968498330693093538848  \\
32 & 5 & 7753587725454287210291908773219679038077036  \\
32 & 6 & 2603945884819023919715984436445634621702948  \\
32 & 7 & 322189870761730650250511467871042510892076  \\
 & \\
32 & sum  & 32311337344846320032306777853403405434882968  \\
\end{longtable}

\newpage

\begin{longtable}{r|r|r}
$e$ & $v$ & $u_{14}(e,v)$ \\
\hline
\endhead
 & \\
28 & 1 & 5349362295912408418285480950292454  \\
 & \\
28 & sum  & 5349362295912408418285480950292454  \\
 & \\
29 & 1 & 1329513645388215594553239451794715965  \\
29 & 2 & 1329513645388215594553239451794715965  \\
 & \\
29 & sum  & 2659027290776431189106478903589431930  \\
 & \\
30 & 1 & 160898561634069521892363237502437777425  \\
30 & 2 & 346855611453502747893618316325861938456  \\
30 & 3 & 160898561634069521892363237502437777425  \\
 & \\
30 & sum  & 668652734721641791678344791330737493306  \\
 & \\
31 & 1 & 12765769708416431289559831195493899470210  \\
31 & 2 & 44011104918179662694793003075316613282622  \\
31 & 3 & 44011104918179662694793003075316613282622  \\
31 & 4 & 12765769708416431289559831195493899470210  \\
 & \\
31 & sum  & 113553749253192187968705668541621025505664  \\
 & \\
32 & 1 & 752086797680822688120006130623923595622946  \\
32 & 2 & 3656730523358976901036890991989391463234050  \\
32 & 3 & 5848430055307490353596054635418558343551050  \\
32 & 4 & 3656730523358976901036890991989391463234050  \\
32 & 5 & 752086797680822688120006130623923595622946  \\
 & \\
32 & sum  & 14666064697387089531909848880645188461265042  \\
 & \\
33 & 1 & 35267296779456996454041460582407284988489428  \\
33 & 2 & 225339610956320336710389271871310435524097264  \\
33 & 3 & 508319887252694286639071826664484584536471066  \\
33 & 4 & 508319887252694286639071826664484584536471066  \\
33 & 5 & 225339610956320336710389271871310435524097264  \\
33 & 6 & 35267296779456996454041460582407284988489428  \\
 & \\
33 & sum  & 1537853589976943239607005118236404610098115516  \\
 & \\
34 & 1 & 1376188688409635874905853343655201243854733608  \\
34 & 2 & 11039921104104953238907248969596220332376818700  \\
34 & 3 & 32730965901080739246379167442830648082201463568  \\
34 & 4 & 46166673137751515758000677859503539538189538108  \\
34 & 5 & 32730965901080739246379167442830648082201463568  \\
34 & 6 & 11039921104104953238907248969596220332376818700  \\
34 & 7 & 1376188688409635874905853343655201243854733608  \\
 & \\
34 & sum  & 136460824524942172478385217371667678855055569860  \\
\end{longtable}

\newpage

\begin{longtable}{r|r|r}
$e$ & $v$ & $u_{15}(e,v)$ \\
\hline
\endhead
 & \\
30 & 1 & 15707315253480198543039354159336702543  \\
 & \\
30 & sum  & 15707315253480198543039354159336702543  \\
 & \\
31 & 1 & 4254404066846916348883588403716819128103  \\
31 & 2 & 4254404066846916348883588403716819128103  \\
 & \\
31 & sum  & 8508808133693832697767176807433638256206  \\
 & \\
32 & 1 & 560291242973155478934912238951878579485370  \\
32 & 2 & 1205555508775085796667646968133497238927983  \\
32 & 3 & 560291242973155478934912238951878579485370  \\
 & \\
32 & sum  & 2326137994721396754537471446037254397898723  \\
 & \\
33 & 1 & 48296285125745000564523254650901622578745046  \\
33 & 2 & 165935861505220722116579262978988782927590263  \\
33 & 3 & 165935861505220722116579262978988782927590263  \\
33 & 4 & 48296285125745000564523254650901622578745046  \\
 & \\
33 & sum  & 428464293261931445362205035259780811012670618  \\
 & \\
34 & 1 & 3085985802474307761706438171133166671774221272  \\
34 & 2 & 14933910157176516402233695721551256357086244372  \\
34 & 3 & 23848012464697728367174976612736644751185622064  \\
34 & 4 & 14933910157176516402233695721551256357086244372  \\
34 & 5 & 3085985802474307761706438171133166671774221272  \\
 & \\
34 & sum  & 59887804383999376695055244398105490808906553352  \\
 & \\
35 & 1 & 156675496336534578862661113608020513244779757004  \\
35 & 2 & 995278045846569420039029893381074747025407746934  \\
35 & 3 & 2238805803235545167737076841479360428504150689292  \\
35 & 4 & 2238805803235545167737076841479360428504150689292  \\
35 & 5 & 995278045846569420039029893381074747025407746934  \\
35 & 6 & 156675496336534578862661113608020513244779757004  \\
 & \\
35 & sum  & 6781518690837298333277535696936911377548676386460  \\
 & \\
36 & 1 & 6607804650427895027651495257432388201136957637604  \\
36 & 2 & 52651158391352383107127670390450702272652846669207  \\
36 & 3 & 155486533895708212045879189159875151451391021408966  \\
36 & 4 & 219028932036157434744940729921104818730252539947224  \\
36 & 5 & 155486533895708212045879189159875151451391021408966  \\
36 & 6 & 52651158391352383107127670390450702272652846669207  \\
36 & 7 & 6607804650427895027651495257432388201136957637604  \\
 & \\
36 & sum  & 648519925911134415106257439536621302580614191378778  \\
\end{longtable}

\newpage

\begin{longtable}{r|r|r}
$e$ & $v$ & $u_{16}(e,v)$ \\
\hline
\endhead
 & \\
32 & 1 & 53160783752637968542926390100726167551610  \\
 & \\
32 & sum  & 53160783752637968542926390100726167551610  \\
 & \\
33 & 1 & 15600320175507300729043193735546421507235073  \\
33 & 2 & 15600320175507300729043193735546421507235073  \\
 & \\
33 & sum  & 31200640351014601458086387471092843014470146  \\
 & \\
34 & 1 & 2223274627983595422507340640133497482768944961  \\
34 & 2 & 4775494679658728042571008949941678370459941602  \\
34 & 3 & 2223274627983595422507340640133497482768944961  \\
 & \\
34 & sum  & 9222043935625918887585690230208673335997831524  \\
 & \\
35 & 1 & 207098988317023096921058787758993781463514981524  \\
35 & 2 & 709331542316073084891616565646314491079275188466  \\
35 & 3 & 709331542316073084891616565646314491079275188466  \\
35 & 4 & 207098988317023096921058787758993781463514981524  \\
 & \\
35 & sum  & 1832861061266192363625350706810616545085580339980  \\
 & \\
36 & 1 & 14279273921214976669647789455348595104544279637003  \\
36 & 2 & 68805165020043702416786649792601959818097931393304  \\
36 & 3 & 109721245211691855532004451836827883534451504403689  \\
36 & 4 & 68805165020043702416786649792601959818097931393304  \\
36 & 5 & 14279273921214976669647789455348595104544279637003  \\
 & \\
36 & sum  & 275890123094209213704873330332728993379735926464303  \\
 & \\
37 & 1 & 781105068702241754910796845876629366053531576061370  \\
37 & 2 & 4935719985964953802862159423860034674318681677247312  \\
37 & 3 & 11073916808061482567461763466695337023784694521589900  \\
37 & 4 & 11073916808061482567461763466695337023784694521589900  \\
37 & 5 & 4935719985964953802862159423860034674318681677247312  \\
37 & 6 & 781105068702241754910796845876629366053531576061370  \\
 & \\
37 & sum  & 33581483725457356250469439472864002128313815549797164  \\
 & \\
38 & 1 & 35441207898429792043123436123187064049015449552488250  \\
38 & 2 & 280656201598016620390674260860638474559649068763638975  \\
38 & 3 & 825842658906095001495942350546153309388786108632837332  \\
38 & 4 & 1161964050508679879461554362247938069968931276241633907  \\
38 & 5 & 825842658906095001495942350546153309388786108632837332  \\
38 & 6 & 280656201598016620390674260860638474559649068763638975  \\
38 & 7 & 35441207898429792043123436123187064049015449552488250  \\
 & \\
38 & sum  & 3445844187313762707321034457307895765963832530139563021  \\
\end{longtable}

\newpage

\begin{longtable}{r|r|r}
$e$ & $v$ & $u_{17}(e,v)$ \\
\hline
\endhead
 & \\
34 & 1 & 205445432928009832581491069006516880609705841  \\
 & \\
34 & sum  & 205445432928009832581491069006516880609705841  \\
 & \\
35 & 1 & 64986432002837812745875711198508285517101944963  \\
35 & 2 & 64986432002837812745875711198508285517101944963  \\
 & \\
35 & sum  & 129972864005675625491751422397016571034203889926  \\
 & \\
36 & 1 & 9973161581095456726301922365556925358027314253479  \\
36 & 2 & 21388203433438836924480615057329648012547524988820  \\
36 & 3 & 9973161581095456726301922365556925358027314253479  \\
 & \\
36 & sum  & 41334526595629750377084459788443498728602153495778  \\
 & \\
37 & 1 & 999214401417939130045637235023670174633866987201502  \\
37 & 2 & 3412631887102397881486610278438413589213538056913876  \\
37 & 3 & 3412631887102397881486610278438413589213538056913876  \\
37 & 4 & 999214401417939130045637235023670174633866987201502  \\
 & \\
37 & sum  & 8823692577040674023064495026924167527694810088230756  \\
 & \\
38 & 1 & 74008165218025656228403282401053203578535203947902856  \\
38 & 2 & 355210215550916519986577506055980731891976679082990264  \\
38 & 3 & 565713794642371539921830564202098813295282815677680430  \\
38 & 4 & 355210215550916519986577506055980731891976679082990264  \\
38 & 5 & 74008165218025656228403282401053203578535203947902856  \\
 & \\
38 & sum  & 1424150556180255892351792141116166684236306581739466670  \\
 & \\
39 & 1 & 4343202278169258593275684010568229749944220951468232434  \\
39 & 2 & 27311052025745996059683391861883339932687540212345629528  \\
39 & 3 & 61130753148022353058797706674786842441835799506625894124  \\
39 & 4 & 61130753148022353058797706674786842441835799506625894124  \\
39 & 5 & 27311052025745996059683391861883339932687540212345629528  \\
39 & 6 & 4343202278169258593275684010568229749944220951468232434  \\
 & \\
39 & sum  & 185570014903875215423513565094476824248935121340879512172  \\
 & \\
40 & 1 & 211137508713213986158094387442923550756912980107062330239  \\
40 & 2 & 1662514046685373766481849147320976176363162317404644178392  \\
40 & 3 & 4875869143426687082059537470267033644594639078062839477983  \\
40 & 4 & 6852916545721885675371996110251583758639955478551923500331  \\
40 & 5 & 4875869143426687082059537470267033644594639078062839477983  \\
40 & 6 & 1662514046685373766481849147320976176363162317404644178392  \\
40 & 7 & 211137508713213986158094387442923550756912980107062330239  \\
 & \\
40 & sum  & 20351957943372435344770958120313450502069384229701015473559  \\
\end{longtable}

\newpage

\begin{longtable}{r|r|r}
$e$ & $v$ & $u_{18}(e,v)$ \\
\hline
\endhead
 & \\
36 & 1 & 899180059563093845406786676951930933700666538428  \\
 & \\
36 & sum  & 899180059563093845406786676951930933700666538428  \\
 & \\
37 & 1 & 305209945591860728601292564295170264716895126366229  \\
37 & 2 & 305209945591860728601292564295170264716895126366229  \\
 & \\
37 & sum  & 610419891183721457202585128590340529433790252732458  \\
 & \\
38 & 1 & 50219426896416777538218063589516219785265760267129801  \\
38 & 2 & 107543707445670286349292801498822861069642312371491692  \\
38 & 3 & 50219426896416777538218063589516219785265760267129801  \\
 & \\
38 & sum  & 207982561238503841425728928677855300640173832905751294  \\
 & \\
39 & 1 & 5389202115814743871428660271901529819121489449229108268  \\
39 & 2 & 18357522907485928372236446971581267536474065605106694205  \\
39 & 3 & 18357522907485928372236446971581267536474065605106694205  \\
39 & 4 & 5389202115814743871428660271901529819121489449229108268  \\
 & \\
39 & sum  & 47493450046601344487330214486965594711191110108671604946  \\
 & \\
40 & 1 & 427068485931184787066302658812981942725804369479622658290  \\
40 & 2 & 2042346003975213807545249105132487353129138259822245439027  \\
40 & 3 & 3248824444658530733670030530464329613423303443228840360174  \\
40 & 4 & 2042346003975213807545249105132487353129138259822245439027  \\
40 & 5 & 427068485931184787066302658812981942725804369479622658290  \\
 & \\
40 & sum  & 8187653424471327922893134058355268205133188701832576554808  \\
 & \\
41 & 1 & 26784645725041818334522973443000470040238838410719829307538  \\
41 & 2 & 167674309375247041644579797461104409791966365092985054355672  \\
41 & 3 & 374488402865205525083259282243771736446866572624028195496896  \\
41 & 4 & 374488402865205525083259282243771736446866572624028195496896  \\
41 & 5 & 167674309375247041644579797461104409791966365092985054355672  \\
41 & 6 & 26784645725041818334522973443000470040238838410719829307538  \\
 & \\
41 & sum  & 1137894715930988770124724106295753232558143552255466158320212  \\
 & \\
42 & 1 & 1389944269863838497811621155190654489220344748250650534194358  \\
42 & 2 & 10887331081526501822853177759545887968080128197675407799362682  \\
42 & 3 & 31833348001308959952129406291954071592650439711678587040169837  \\
42 & 4 & 44696120539232218172253544414937593313423676099859800982308502  \\
42 & 5 & 31833348001308959952129406291954071592650439711678587040169837  \\
42 & 6 & 10887331081526501822853177759545887968080128197675407799362682  \\
42 & 7 & 1389944269863838497811621155190654489220344748250650534194358  \\
 & \\
42 & sum  & 132917367244630818717841954828318821413325501415069091729762256  \\
\end{longtable}

\newpage

\begin{longtable}{r|r|r}
$e$ & $v$ & $u_{19}(e,v)$ \\
\hline
\endhead
 & \\
38 & 1 & 4424730378121305321456186121529010463964830910484003  \\
 & \\
38 & sum  & 4424730378121305321456186121529010463964830910484003  \\
 & \\
39 & 1 & 1605186831344690925467081160430702520300496445763873311  \\
39 & 2 & 1605186831344690925467081160430702520300496445763873311  \\
 & \\
39 & sum  & 3210373662689381850934162320861405040600992891527746622  \\
 & \\
40 & 1 & 282085119538112569201481233917466805746825156704653052178  \\
40 & 2 & 603272708788070683140741539446707320754385893163661763443  \\
40 & 3 & 282085119538112569201481233917466805746825156704653052178  \\
 & \\
40 & sum  & 1167442947864295821543704007281640932248036206572967867799  \\
 & \\
41 & 1 & 32302728422532346841074282111005042311763117883791879554812  \\
41 & 2 & 109767667148746367413775667829043306525555960548263837730600  \\
41 & 3 & 109767667148746367413775667829043306525555960548263837730600  \\
41 & 4 & 32302728422532346841074282111005042311763117883791879554812  \\
 & \\
41 & sum  & 284140791142557428509699899880096697674638156864111434570824  \\
 & \\
42 & 1 & 2729008813037064165588608037007491955895014418138507486290090  \\
42 & 2 & 13007112201580570259600784438565181934710759194453710021264759  \\
42 & 3 & 20668163895282079484958851224259658246014568460489861510080658  \\
42 & 4 & 13007112201580570259600784438565181934710759194453710021264759  \\
42 & 5 & 2729008813037064165588608037007491955895014418138507486290090  \\
 & \\
42 & sum  & 52140405924517348335337636175405006027226115685674296525190356  \\
 & \\
43 & 1 & 182285420872810479366334700885431830095294173825146404386445914  \\
43 & 2 & 1136394999010390323212649937280709365011343445435404008508674492  \\
43 & 3 & 2532921941159641534900117827607810131249613553947035703033638132  \\
43 & 4 & 2532921941159641534900117827607810131249613553947035703033638132  \\
43 & 5 & 1136394999010390323212649937280709365011343445435404008508674492  \\
43 & 6 & 182285420872810479366334700885431830095294173825146404386445914  \\
 & \\
43 & sum  & 7703204722085684674958204931547902652712502346415172231857517076  \\
 & \\
44 & 1 & 10064181576960495335927932858268075358326289452842097820366680160  \\
44 & 2 & 78450121794974638316938365981758852583573376009308176509400884718  \\
44 & 3 & 228731201293361297266257064521272334489573226866484237764617340414  \\
44 & 4 & 320854956051603947387522506512966561014063478810265368757633296296  \\
44 & 5 & 228731201293361297266257064521272334489573226866484237764617340414  \\
44 & 6 & 78450121794974638316938365981758852583573376009308176509400884718  \\
44 & 7 & 10064181576960495335927932858268075358326289452842097820366680160  \\
 & \\
44 & sum  & 955345965382196809225769233235565085877009263467534392946403106880  \\
\end{longtable}

\end{document}